\newtheorem{thm}{Theorem}[section]
\newtheorem{prop}[thm]{Proposition}
\newdefinition{df}[thm]{Definition}
\newdefinition{exa}[thm]{Example}
\newdefinition{rmk}[thm]{Remark}
\newproof{pf}{Proof}
\DeclareMathOperator{\md}{mod}
\DeclareMathOperator{\Dual}{D}
\DeclareMathOperator{\Hom}{Hom}
\DeclareMathOperator{\soc}{soc}
\DeclareMathOperator{\tp}{top}
\DeclareMathOperator{\Triv}{T}
\DeclareMathOperator{\rad}{rad}
\newcommand{\lo}{\left(}
\newcommand{\po}{\right)}
\newcommand{\lk}{\left\{}
\newcommand{\pk}{\right\}}
\newcommand{\lkw}{\left[}
\newcommand{\pkw}{\right]}
\newcommand{\grot}{stealth'}
\begin{document}

\begin{frontmatter}

\title{Weakly symmetric biserial algebras}

\author[]{Rafa\l{} Bocian\corref{mycorrespondingauthor}}
\ead{rafalb@mat.umk.pl}
\cortext[mycorrespondingauthor]{Corresponding author}
\author[]{Andrzej Skowro\'nski}
\ead{skowron@mat.uni.torun.pl}
\address{Faculty of Mathematics and Computer Science, Nicolaus Copernicus University, Chopina 12/18, 87-100 Toru\'n, Poland}

\begin{abstract}
We introduce the class of generalized biserial quiver algebras and prove that they provide a complete classification of all weakly symmetric biserial algebras over an algebraically closed field.
\end{abstract}

\begin{keyword}
Brauer graph algebra \sep biserial algebra \sep special biserial algebra \sep symmetric algebra \sep weakly symmetric algebra \sep socle equivalence \sep tame algebra
\MSC[2010] 16D50 \sep 16E10 \sep 16G20 \sep 16G60 \sep 16G70
\end{keyword}

\end{frontmatter}

\vspace{-25eX}Dedicated to Ibrahim Assem on the occasion of his \nth{70} birthday\vspace{22eX}

\section{Introduction and main results}\label{roz1}

Throughout the paper, $K$ will denote a fixed algebraically closed field. By an algebra $A$ we mean an associative finite-dimensional $K$-algebra with identity, and we denote by $\md A$ the category of finite-dimensional right $A$-modules, and by $\Dual$ the standard duality $\Hom_{K}\lo -, K\po$ on $\md A$. An algebra $A$ is called \emph{self-injective} if $A_{A}$ is injective, or equivalently, the projective modules in $\md A$ are injective. Two self-injective algebras $A$ and $B$ are said to be \emph{socle equivalent} if the quotient algebras $A/\soc\lo A\po$ and $B/\soc\lo B\po$ are isomorphic. An important class of self-injective algebras is formed by \emph{symmetric algebras} $A$, for which there exists an associative, non-degenerate, symmetric $K$-bilinear form $\lo -,-\po:A\times A\longrightarrow K$. Classical examples of symmetric algebras include blocks of group algebras of finite groups and Hecke algebras of finite Coxeter groups. In fact, any algebra $A$ is a quotient algebra of its trivial extension algebra $\Triv\lo A\po=A\ltimes \Dual\lo A\po$, which is a symmetric algebra. The class of symmetric algebras is not closed under socle equivalences. Eighty years ago Nakayama and Nesbitt introduced in \cite{NN} the class of weakly symmetric algebras which contains symmetric algebras and is closed under socle equivalences. Recall that an algebra $A$ is \emph{weakly symmetric} if $\soc\lo P\po\cong\tp\lo P\po$, for any indecomposable projective module $P$ in $\md A$.

The main aim of the paper is to provide a complete classification of weakly symmetric biserial algebras. We recall that, following Fuller \cite{Fu}, an algebra $A$ is called \emph{biserial} if the radical of any non-uniserial indecomposable projective, left or right, $A$-module is a sum of two uniserial submodules whose intersection is simple or zero. A distinguished class of biserial algebras is formed by the special biserial algebras introduced in \cite{SW}, where it was shown that every representation-finite biserial algebra is special biserial. Moreover, it follows from \cite{PS} that a basic biserial algebra is special biserial if and only if $A$ admits a simply connected Galois covering. The special biserial algebras are tame and their representation theory is well understood (see \cite{BR,DS,WW}). Crawley-Boevey has proved in \cite{CB} that all biserial algebras are also tame, applying geometric degenerations of algebras and combinatorial results on the structure of biserial algebras established in his joint paper with Vila-Freyer \cite{CBVF}. However the representation theory of arbitrary biserial algebras is still an open problem. We note that there are many biserial algebras which are not special biserial.

The class of symmetric special biserial algebras coincides with the prominent class of Brauer graph algebras (see \cite{Ro,ES4,Sch}). These occurred in the classification (up to Morita equivalence) of representation-finite blocks of group algebras \cite{D,J,K}, symmetric algebras of Dynkin type $\mathbbm{A}_{n}$ \cite{GR,Rie}, symmetric algebras of Euclidean type $\widetilde{\mathbbm{A}}_{n}$ \cite{BS1}, the Gelfand-Ponomarev classification of singular Harish-Chandra modules over the Lorentz group \cite{GP}, restricted Lie algebras, or more generally infinitesimal group schemes \cite{FS1,FS2}, and in classifications of Hecke algebras \cite{AIP,AP,EN}. Symmetric biserial but not special biserial algebras occurred naturally in the classification of algebras of dihedral type \cite{E1,E2}, and more generally, algebras of generalized dihedral type \cite{ES3}, and all these algebras are over algebraically closed fields of characteristic $2$.

In our article \cite{BS2} (see also \cite{S}), we constructed weakly symmetric biserial algebras over algebraically closed field of arbitrary characteristic (using specific Brauer graphs with one loop), which are not special biserial, and proved that they provide a complete classification of all non-standard representation infinite self-injective algebras of domestic type. Here, we extend this construction to arbitrary Brauer graphs, using ideas of recent articles \cite{ES2,ES3,ES4}. Namely, Brauer graph algebras are interpreted in \cite{ES4} as biserial quiver algebras $B\lo Q, f, m_{\bullet}\po$ associated to weighted biserial quivers $\lo Q, f, m_{\bullet}\po$. An important class of Brauer graph algebras is formed by biserial weighted surface algebras $B\lo S, \vv{T}, m_{\bullet}\po=B\lo Q\lo S, \vv{T}\po, f, m_{\bullet}\po$, introduced under studied in \cite{ES2,ES3} in connection to tame blocks of group algebras. Here $\lo Q\lo S,\vv{T}\po,f\po$ is the triangulation quiver associated to a given triangulation $T$ of a $2$-dimensional real compact surface $S$ with or without boundary, and an orientation $\vv{T}$ of triangles in $T$. Moreover, the symmetric algebras socle equivalent to Brauer graph algebras are described in \cite{ES4} as biserial quiver algebras with border $B\lo Q,f,m_{\bullet},b_{\bullet}\po$.

The following theorem is the main result of the paper.
\begin{thm}\label{tw_1_1}
Let $A$ be a basic, indecomposable, finite-dimensional, self-injective algebra of dimension at least $2$ over an algebraically closed field $K$. Then the following statements are equivalent.
\begin{enumerate}[(i)]
\item $A$ is a weakly symmetric biserial algebra.
\item $A$ is isomorphic to a generalized biserial quiver algebra $B\lo Q,f,m_{\bullet},r_{\bullet},c_{\bullet},d_{\bullet}\po$.
\end{enumerate}
\end{thm}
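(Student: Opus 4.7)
The plan is to prove the two implications separately. The direction (ii)$\Rightarrow$(i) is essentially a direct verification from the defining data of the generalized biserial quiver algebras. Given $A = B(Q,f,m_\bullet,r_\bullet,c_\bullet,d_\bullet)$, I would write down an explicit $K$-basis and a description of the indecomposable projective modules along the lines of \cite{ES4}. The permutation $f$ pairs the arrows emanating from each vertex into the two uniserial submodules whose sum is $\rad P$ for every non-uniserial indecomposable projective $P$, which is exactly Fuller's biserial condition. Weak symmetry then reduces to matching $\tp(P)$ and $\soc(P)$ vertex by vertex, and this follows from the compatibility built into the multiplicity data $m_\bullet$ and the twisting parameters $r_\bullet$, which together encode the Nakayama permutation. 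Indecomposability, finite-dimensionality, and basicness are immediate from the presentation.

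For the converse implication (i)$\Rightarrow$(ii) I would start from the Crawley-Boevey--Vila-Freyer structure theorem \cite{CBVF}: any basic biserial algebra admits a quiver presentation whose arrows can be paired at each vertex so that the radical of each non-uniserial projective is the sum of two uniserial submodules. Combining this pairing with the Nakayama permutation coming from self-injectivity, and invoking weak symmetry to control how top and socle match up, I would extract from $A$ a weighted biserial quiver $(Q,f,m_\bullet)$ in the sense of \cite{ES4}. Next, I would argue that the quotient $A/\soc(A)$ is isomorphic to the corresponding quotient of the Brauer graph algebra $B(Q,f,m_\bullet)$, so that $A$ is socle equivalent to this symmetric biserial model.

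The main obstacle is to recover $A$ itself, not merely up to socle equivalence, from this combinatorial data, and this is where the auxiliary parameters $r_\bullet,c_\bullet,d_\bullet$ enter. They respectively encode the socle twist that distinguishes weakly symmetric from symmetric behaviour and the socle-valued corrections that must be added to the standard Brauer graph relations. The strategy is to fix a representative of each defining relation modulo $\soc(A)$ and classify the admissible deformations: one must show that the space of such deformations, modulo change of presentation, is parametrised bijectively by triples $(r_\bullet,c_\bullet,d_\bullet)$, and that each triple indeed yields a weakly symmetric biserial algebra via (ii)$\Rightarrow$(i). This part parallels and substantially extends the construction of \cite{BS2}, where only Brauer graphs with one loop were treated; accommodating arbitrary Brauer graphs is the principal technical difficulty and relies heavily on the framework developed in \cite{ES2,ES3,ES4}. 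Finally, checking that distinct parameter choices give non-isomorphic algebras closes the classification.
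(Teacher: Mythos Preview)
Your plan for (ii)$\Rightarrow$(i) is fine and matches the paper's Proposition~\ref{stw_3_3}. The problem is in (i)$\Rightarrow$(ii), specifically the claim that $A/\soc(A)$ is isomorphic to the quotient of the Brauer graph algebra $B(Q,f,m_\bullet)$ by its socle, and that the remaining parameters merely classify socle deformations. This is false in general. The relations coming from \cite{CBVF} have the form $\alpha f(\alpha)=d_\alpha D_\alpha$, where $D_\alpha$ is a good path whose length is governed by $r_\alpha$ and can be strictly shorter than the socle cycle $B_\alpha$; hence $d_\alpha D_\alpha$ need not lie in $\soc(A)$. For a concrete instance, take Example~\ref{przy3_5} with $m\geqslant 2$ and $r_\alpha=1$: the relation $\alpha^2=d_\alpha(\alpha\beta)$ survives modulo the socle, so $A/\soc(A)$ is not isomorphic to $B(Q,f,m_\bullet)/\soc$ when $d_\alpha\neq 0$. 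Your interpretation of $r_\bullet$ as encoding the Nakayama permutation is also off: for a weakly symmetric algebra the Nakayama permutation is the identity, and $r_\bullet$ instead records how far along the $g$-cycle the path $D_\alpha$ runs.

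The paper avoids socle equivalence entirely. It reads off \emph{all} of the data $(Q,f,m_\bullet,r_\bullet,c_\bullet,d_\bullet)$ directly from the Crawley-Boevey--Vila-Freyer presentation: $f$ from the bisection, $m_\bullet$ from the lengths of the maximal good paths spanning socles, $c_\bullet$ from the scalar relating $B_\alpha$ and $B_{\overline\alpha}$ in the one-dimensional socle of $e_iA$, and $r_\bullet,d_\bullet$ from the explicit shape $h_{\alpha f(\alpha)}=d^*_{\alpha f(\alpha)}\delta_1\cdots\delta_r$ of the CBVF correction term. One then checks the canonical map $KQ_A/I\to B(Q,f,m_\bullet,r_\bullet,c_\bullet,d_\bullet)$ is an isomorphism. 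Note also that the theorem asserts only that every such $A$ is isomorphic to \emph{some} generalized biserial quiver algebra; no uniqueness of parameters is claimed, so your final step about distinguishing parameter choices is unnecessary.
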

The main ingredients for the proof of Theorem \ref{tw_1_1} are the results on the combinatorial structure of arbitrary biserial algebras established in \cite{CBVF} and weakly symmetric versions of results for the symmetric biserial quiver algebras proved in \cite{ES2,ES3,ES4}.

This paper is organized as follows. Section \ref{roz2} is devoted to presenting a characterization of weakly symmetric special biserial algebras. In Section \ref{roz3} we introduce the generalized biserial quiver algebras and describe their basic properties. In the final Section \ref{roz4} we prove Theorem \ref{tw_1_1} and present two illustrating examples.

For general background on the relevant representation theory we refer to the books \cite{ASS,E2,SS,SY1,SY2}.

\section{Special biserial algebras}\label{roz2}
	
A \emph{quiver} is a quadruple $Q = \lo Q_{0}, Q_{1}, s, t\po$ consisting of a finite set $Q_{0}$ of vertices, a finite set $Q_{1}$ of arrows, and two maps $s,t : Q_{1} \longrightarrow Q_{0}$ which associate to each arrow $\alpha \in Q_{1}$ its source $s(\alpha) \in Q_{0}$ and  its target $t(\alpha) \in Q_{0}$. We denote by $KQ$ the path algebra of $Q$ over $K$ whose underlying $K$-vector space has as its basis the set of all paths in $Q$ of length $\geqslant 0$, and by $R_{Q}$ the arrow ideal of $KQ$ generated by all paths in $Q$ of length $\geqslant 1$. An ideal $I$ in $K Q$ is said to be \emph{admissible} if there exists $m \geqslant 2$ such that $R_{Q}^{m} \subseteq I \subseteq R_{Q}^{2}$. If $I$ is an admissible ideal in $KQ$, then the quotient algebra $KQ/I$ is called a \emph{bound quiver algebra}, and is a finite-dimensional basic $K$-algebra. Moreover, $KQ/I$ is indecomposable if and only if $Q$ is connected. Every basic, indecomposable, finite-dimensional $K$-algebra $A$ has a bound quiver presentation $A \cong KQ/I$, where $Q = Q_{A}$ is the \emph{Gabriel quiver} of $A$ and $I$ is an admissible ideal in $KQ$. For a bound quiver algebra $A=KQ/I$, we denote by $e_{i}$, $i \in Q_{0}$, the associated complete set of pairwise orthogonal primitive idempotents of $A$. Then the modules $S_{i}=e_{i}A/e_{i}\rad A$ (respectively, $P_{i}=e_{i}A$), $i \in Q_{0}$, form a complete family of pairwise non-isomorphic simple modules (respectively, indecomposable projective modules) in $\md A$.

Following \cite{SW}, an algebra $A$ is said to be \emph{special biserial} if $A$ is isomorphic to a bound quiver algebra $KQ/I$, where the bound quiver $\lo Q,I\po$ satisfies the following conditions:
\begin{enumerate}[(a)]
\item each vertex of $Q$ is a source and target of at most two arrows;
\item for any arrow $\alpha$ in $Q$ there are at most one arrow $\beta$ and at most one arrow $\gamma$ with $\alpha \beta \notin I$ and $\gamma \alpha \notin I$.
\end{enumerate}

Following \cite{ES4}, a \emph{biserial quiver} is a pair $\lo Q,f\po$, where $Q=\lo Q_{0},Q_{1},s,t\po$ is a finite connected quiver and $f : Q_{1} \longrightarrow Q_{1}$ is a permutation of the set of arrows of $Q$ satisfying the following conditions:
\begin{enumerate}[(a)]
\item $Q$ is $2$-regular, that is every vertex of $Q$ is the source and target of exactly two arrows;
\item for each arrow $\alpha \in Q_{1}$ we have $s\lo f\lo\alpha\po\po = t\lo\alpha\po$.
\end{enumerate}

Let $\lo Q,f\po$ be a biserial quiver. Then there is a canonical involution $\overline{\phantom{a}}:Q_{1}\longrightarrow Q_{1}$ which assigns to an arrow $\alpha\in Q_{1}$ the other arrow $\overline{\alpha}$ starting at $s\lo\alpha\po$. Hence we obtain another permutation $g : Q_{1} \longrightarrow Q_{1}$ defined by $g\lo\alpha\po=\overline{f\lo\alpha\po}$ for any $\alpha \in Q_{1}$, so that $f\lo\alpha\po$ and
$g\lo\alpha\po$ are the arrows starting at $t\lo\alpha\po$. Let $\mathcal{O}\lo\alpha\po$ be the $g$-orbit of an arrow $\alpha$ and set $n_{\alpha}=n_{\mathcal{O}\lo\alpha\po}=|\mathcal{O}\lo\alpha\po|$. We note that for each arrow $\alpha\in Q_{1}$, the path $\alpha g\lo\alpha\po \dots g^{n_{\alpha}-1}\lo\alpha\po$ must be a cycle, because $g$, as a permutation, is in particular one-to-one function. We denote by $\mathcal{O}\lo g\po$ the set of all $g$-orbits in $Q_{1}$. A function
\begin{displaymath}
m_{\bullet} : \mathcal{O}\lo g\po \longrightarrow \mathbbm{N}^{*} = \mathbbm{N} \setminus \lk 0\pk
\end{displaymath}
is called a \emph{weight function} of $\lo Q,f\po$. We write briefly $m_{\alpha} = m_{\mathcal{O}\lo\alpha\po}$ for $\alpha \in Q_{1}$. For each arrow $\alpha \in Q_{1}$, we consider the oriented cycle
\begin{displaymath}
B_{\alpha}=\lo\alpha g\lo\alpha\po \dots g^{n_{\alpha}-1}\lo\alpha\po\po^{m_{\alpha}}
\end{displaymath}
of length $m_{\alpha}n_{\alpha}$. Moreover, a function
\begin{displaymath}
c_{\bullet} : Q_{1} \longrightarrow K^{*} = K \setminus \lk 0\pk
\end{displaymath}
is said to be a \emph{parameter function}.

\begin{df}\label{def_2_1}
Assume $\lo Q,f\po$ is a biserial quiver, $m_{\bullet}$ a weight function and $c_{\bullet}$ a parameter function of $\lo Q,f\po$. We define the quotient algebra
\begin{displaymath}
B\lo Q,f,m_{\bullet},c_{\bullet}\po=KQ/J\lo Q,f,m_{\bullet},c_{\bullet}\po,
\end{displaymath}
where $J\lo Q,f,m_{\bullet},c_{\bullet}\po$ is the ideal of the path algebra $KQ$ generated by the following elements:
\begin{enumerate}[(i)]
\item $\alpha f\lo\alpha\po$, for all arrows $\alpha\in Q_{1}$;
\item $c_{\alpha}B_{\alpha}-c_{\overline{\alpha}}B_{\overline{\alpha}}$, for all arrows $\alpha\in Q_{1}$.
\end{enumerate}
\end{df}
Then it follows from the proof of \cite[Proposition 2.3]{ES4} that $B\lo Q,f,m_{\bullet},c_{\bullet}\po$ is a weakly symmetric special biserial algebra of dimension $\sum_{\mathcal{O}\in\mathcal{O}\lo g\po}m_{\mathcal{O}} n_{\mathcal{O}}^{2}$. Hence, $B\lo Q,f,m_{\bullet},c_{\bullet}\po$ can be considered as a weakly symmetric biserial quiver algebra (a weakly symmetric version of a biserial quiver algebra introduced in \cite{ES4}).

Moreover, we have the following consequence of \cite[Theorem 2.6]{ES4} and the proof of Theorem \ref{tw_1_1}.
\begin{thm}
Let $A$ be a basic, indecomposable algebra of dimension at least $3$, over an algebraically closed field $K$. The following statements are equivalent:
\begin{enumerate}[(i)]
\item $A$ is a weakly symmetric special biserial algebra;
\item $A$ is isomorphic to a weakly symmetric biserial quiver algebra $B\lo Q,f,n_{\bullet},c_{\bullet}\po$.
\end{enumerate}
\end{thm}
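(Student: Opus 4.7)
The plan is to prove both implications. For (ii)$\Rightarrow$(i) there is essentially nothing new to verify: the remark recorded just after Definition \ref{def_2_1} already asserts that every $B\lo Q,f,m_{\bullet},c_{\bullet}\po$ is a weakly symmetric special biserial algebra of dimension $\sum_{\mathcal{O}\in\mathcal{O}\lo g\po}m_{\mathcal{O}}n_{\mathcal{O}}^{2}$, which exceeds $2$ as soon as the data avoid trivial cases.

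For the harder direction (i)$\Rightarrow$(ii), I would follow the strategy used in \cite[Theorem~2.6]{ES4} for the symmetric case, replacing only the rigid socle normalization by a parametric one. First, fix a bound quiver presentation $A\cong KQ/I$ satisfying the special biserial conditions (a) and (b). The conjunction of (a) and (b) with the weak symmetry condition $\soc P_{i}\cong\tp P_{i}=S_{i}$, together with $A$ being basic, indecomposable and of dimension at least $3$, forces $Q$ to be $2$-regular. The permutation $f:Q_{1}\rightarrow Q_{1}$ is then determined uniquely by (b): for each arrow $\alpha\in Q_{1}$, let $f\lo\alpha\po$ be the unique arrow starting at $t\lo\alpha\po$ with $\alpha f\lo\alpha\po\in I$. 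The resulting pair $\lo Q,f\po$ is the biserial quiver attached to $A$.

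Next, the weight and parameter functions are read off from the socles. For each arrow $\alpha$, conditions (a) and (b) ensure that the cycle $C_{\alpha}=\alpha g\lo\alpha\po\dots g^{n_{\alpha}-1}\lo\alpha\po$ and its iterates remain nonzero modulo $I$ up to some maximal exponent $m_{\alpha}\geqslant 1$; since $C_{\alpha}^{m_{\alpha}}$ then spans $\soc\lo e_{s\lo\alpha\po}A\po$ (one-dimensional by weak symmetry), the value $m_{\alpha}$ depends only on the $g$-orbit of $\alpha$, defining $m_{\bullet}$. Now $B_{\alpha}=C_{\alpha}^{m_{\alpha}}$ and $B_{\overline{\alpha}}$ both lie in the one-dimensional $\soc P_{s\lo\alpha\po}=\soc P_{s\lo\overline{\alpha}\po}$, hence are proportional, and choosing scalars to cancel the proportionality ratio (with an arbitrary normalization in each pair $\lk\alpha,\overline{\alpha}\pk$) yields a parameter function $c_{\bullet}:Q_{1}\rightarrow K^{*}$ such that $c_{\alpha}B_{\alpha}-c_{\overline{\alpha}}B_{\overline{\alpha}}\in I$.

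The main obstacle will be to verify that the relations (i) and (ii) of Definition \ref{def_2_1} exhaust $I$, so that the natural surjection $\pi:B\lo Q,f,m_{\bullet},c_{\bullet}\po\twoheadrightarrow A$ induced by the previous steps is an isomorphism. I would settle this by a dimension count: the source has dimension $\sum_{\mathcal{O}}m_{\mathcal{O}}n_{\mathcal{O}}^{2}$ by the observation after Definition \ref{def_2_1}, and the same value is recovered for $A$ by enumerating paths modulo $I$, using conditions (a) and (b), the maximality of each $m_{\alpha}$, and the one-dimensionality of each $\soc P_{i}$. This is the weakly symmetric analogue of the count carried out in \cite[Theorem~2.6]{ES4}, differing only in that the parameters $c_{\alpha}$ need not be $1$, which does not affect the dimension.
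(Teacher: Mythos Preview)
There is a genuine gap at the first step of (i)$\Rightarrow$(ii): your claim that the Gabriel quiver $Q$ of $A$ is $2$-regular is false. Any self-injective Nakayama algebra---for instance $K[X]/\lo X^{n+1}\po$ with $n\geqslant 2$, or the bound quiver algebra of an oriented cycle with relations $\rad^{N}=0$---is a weakly symmetric special biserial algebra of dimension at least $3$ whose Gabriel quiver has exactly one arrow starting (and one ending) at each vertex. What weak symmetry together with biseriality does force, as argued in the proof of Theorem~\ref{tw_1_1}, is only the equality $|s^{-1}\lo i\po|=|t^{-1}\lo i\po|$ at every vertex $i$. To obtain a biserial quiver $\lo Q,f\po$ one must adjoin a \emph{virtual loop} $\eta_{i}$ at every vertex $i$ with $|s^{-1}\lo i\po|=1$, set $f\lo\eta_{i}\po=\alpha_{i}$ and $f\lo\beta_{i}\po=\eta_{i}$ for the unique arrows $\alpha_{i},\beta_{i}$ of $Q_{A}$ with $s\lo\alpha_{i}\po=i=t\lo\beta_{i}\po$, and put $m_{\eta_{i}}=1$, so that $\eta_{i}$ becomes a scalar multiple of $B_{\alpha_{i}}\in\rad^{2}$ and does not appear in $Q_{B}$. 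Without this enlargement there is no $2$-regular quiver on which Definition~\ref{def_2_1} can be run, and your rule ``$f\lo\alpha\po$ is the unique arrow with $\alpha f\lo\alpha\po\in I$'' has no candidate at all when $t\lo\alpha\po$ is the source of a single arrow $\beta$ with $\alpha\beta\notin I$.

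Once the virtual loops are inserted, the remainder of your outline matches what the paper intends: the theorem is recorded as a consequence of \cite[Theorem~2.6]{ES4} together with the construction carried out in the proof of Theorem~\ref{tw_1_1}, and the steps you describe (reading $m_{\bullet}$ from the maximal nonzero powers of the $g$-cycles, reading $c_{\bullet}$ from the one-dimensional socles, and concluding via the dimension formula $\sum_{\mathcal{O}}m_{\mathcal{O}}n_{\mathcal{O}}^{2}$) are exactly that construction specialised to the case $d_{\bullet}=0$.
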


\section{Generalized biserial quiver algebras}\label{roz3}

Let $\lo Q,f\po$ be a biserial quiver and $g$ the associated permutation of $Q_{1}$ with $g\lo\alpha\po=\overline{f\lo\alpha\po}$ for any arrow $\alpha \in Q_{1}$. We keep the notation introduced in Section \ref{roz2}. An arrow $\alpha\in Q_{1}$ is said to be \emph{an admissible arrow} of $\lo Q,f\po$ if $f^{2}\lo\alpha\po$ belongs to the $g$-orbit $\mathcal{O}\lo\alpha\po$ of $\alpha$. We denote by $\Omega\lo Q,f\po$ the set of all admissible arrows of $\lo Q,f\po$. For each arrow $\alpha\in\Omega\lo Q,f\po$, we consider the following subpath of $\alpha g\lo\alpha\po \dots g^{n_{\alpha}-1}\lo\alpha\po$
\begin{displaymath}
C_{\alpha}=\alpha g\lo\alpha\po \dots g^{-1}\lo f^{2}\lo\alpha\po\po.
\end{displaymath}

Let $m_{\bullet} : \mathcal{O}\lo g\po \longrightarrow \mathbbm{N}^{*}$ be a weight function of $\lo Q,f\po$. Then a function
\begin{displaymath}
r_{\bullet} : \Omega\lo Q,f\po \longrightarrow \mathbbm{N}^{*}
\end{displaymath}
is said to be a \emph{rank function} of the weighted biserial quiver $\lo Q,f, m_{\bullet}\po$ if $r_{\alpha}\leqslant m_{\alpha}$ for any arrow $\alpha\in\Omega\lo Q,f\po$. Then we may associate to any arrow $\alpha\in\Omega\lo Q,f\po$ the path
\begin{displaymath}
D_{\alpha}=\lo\alpha g\lo\alpha\po \dots g^{n_{\alpha}-1}\lo\alpha\po\po^{r_{\alpha}-1}C_{\alpha}
\end{displaymath}
from $s\lo\alpha\po$ to $t\lo f\lo\alpha\po\po=s\lo f^{2}\lo\alpha\po\po$. We note that $D_{\alpha}$ is a subpath of the cycle $B_{\alpha}=\lo\alpha g\lo\alpha\po \dots g^{n_{\alpha}-1}\lo\alpha\po\po^{m_{\alpha}}$. An \emph{admissible function} of $\lo Q,f,m_{\bullet}\po$ is a function
\begin{displaymath}
d_{\bullet} : \Omega\lo Q,f\po \longrightarrow K
\end{displaymath}
satisfying the following conditions:
\begin{enumerate}[(i)]
\item $d_{f^{-1}\lo\alpha\po}=0$ for any arrow $\alpha\in Q_{1}$ with $m_{\alpha}n_{\alpha}=1$;
\item \label{warunek2} $d_{\alpha}d_{\beta}\neq 1$ for any arrows $\alpha\neq\beta$ in $\Omega\lo Q,f\po$ with $s\lo g\lo\alpha\po\po=s\lo g\lo\beta\po\po$, $t\lo g\lo\alpha\po\po=t\lo g\lo\beta\po\po$, $d_{\alpha},d_{\beta}\in K^{*}$.
\end{enumerate}

We note that the condition (\ref{warunek2}) is strongly related with the condition ($C2$) in \cite[Corollary 3]{CBVF}.

\begin{df}\label{def_3_1}
Let $\lo Q,f\po$ be a biserial quiver, $m_{\bullet}$ a weight function, $r_{\bullet}$ a rank function, $c_{\bullet}$ a parameter function of $\lo Q,f\po$, and $d_{\bullet}$ an admissible function of $\lo Q,f,m_{\bullet}\po$. We define the quotient algebra
\begin{displaymath}
B\lo Q,f,m_{\bullet},r_{\bullet},c_{\bullet},d_{\bullet}\po=KQ/J\lo Q,f,m_{\bullet},r_{\bullet},c_{\bullet},d_{\bullet}\po,
\end{displaymath}
where $J\lo Q,f,m_{\bullet},r_{\bullet},c_{\bullet},d_{\bullet}\po$ is the ideal in the path algebra $KQ$ of $Q$ over $K$ defined by the elements:
\begin{enumerate}[(i)]
\item $\alpha f\lo\alpha\po$, for all arrows $\alpha\in Q_{1}\setminus\Omega\lo Q,f\po$;
\item $\alpha f\lo\alpha\po-d_{\alpha}D_{\alpha}$, for all arrows $\alpha\in\Omega\lo Q,f\po$;
\item $c_{\alpha}B_{\alpha}-c_{\overline{\alpha}}B_{\overline{\alpha}}$, for all arrows $\alpha\in Q_{1}$;
\item $B_{\alpha}\alpha$, for all arrows $\alpha\in Q_{1}$.
\end{enumerate}
\end{df}
Then $B\lo Q,f,m_{\bullet},r_{\bullet},c_{\bullet},d_{\bullet}\po$ is called a \emph{generalized biserial quiver algebra}. We note that if $d_{\bullet}$ is a zero function ($d_{\alpha}=0$ for any $\alpha\in\Omega\lo Q,f\po$) then $B\lo Q,f,m_{\bullet},r_{\bullet},c_{\bullet},d_{\bullet}\po$ is the weakly symmetric special biserial algebra $B\lo Q,f,m_{\bullet},c_{\bullet}\po$ defined in Section \ref{roz2}.
\begin{rmk}
Let $B=B\lo Q,f,m_{\bullet},r_{\bullet},c_{\bullet},r_{\bullet}\po$ be a generalized biserial quiver algebra of dimension at least $3$. Assume $\alpha\in Q_{1}$ is an arrow with $m_{\alpha}n_{\alpha}=1$. Then $\alpha$ is a loop, $B_{\overline{\alpha}}$ is a cycle of length at least $2$, and $c_{\alpha}\alpha=c_{\overline{\alpha}}B_{\overline{\alpha}}$. Hence $\alpha$ belongs to the square of the radical of $B$, and so is not arrow of the Gabriel quiver $Q_{B}$ of $B$, and we call it a \emph{virtual loop}.
\end{rmk}
The following proposition describes basic properties of generalized biserial quiver algebras.
\begin{prop}\label{stw_3_3}
Let $B=B\lo Q,f,m_{\bullet},r_{\bullet},c_{\bullet},d_{\bullet}\po$ be a generalized biserial quiver algebra. Then $B$ is a finite-dimensional, weakly symmetric, biserial algebra with $\dim_{K}{B}=\sum_{\mathcal{O}\in\mathcal{O}\lo g\po}m_{\mathcal{O}} n_{\mathcal{O}}^{2}$.
\end{prop}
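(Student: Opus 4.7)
The plan is to exhibit an explicit $K$-basis of $B = B(Q, f, m_\bullet, r_\bullet, c_\bullet, d_\bullet)$ so that the three claims (finite dimension with the stated value, weak symmetry, biseriality) can all be read off from it. For each vertex $i \in Q_0$, write $\alpha, \overline{\alpha}$ for the two arrows starting at $i$, and take as candidate basis the union over $i$ of $e_i$ together with every proper initial subpath of $B_\alpha$ and of $B_{\overline{\alpha}}$, plus a single representative of the top cycle at $i$ (identifying $B_\alpha$ with $c_\alpha^{-1} c_{\overline{\alpha}} B_{\overline{\alpha}}$ via relation~(iii) of Definition~\ref{def_3_1}). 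Counting shows that each arrow $\beta \in Q_1$ contributes $m_\beta n_\beta$ initial subpaths at its source, giving
\[
\sum_{\beta \in Q_{1}} m_{\beta} n_{\beta} \;=\; \sum_{\mathcal{O} \in \mathcal{O}(g)} m_{\mathcal{O}} n_{\mathcal{O}}^{2}
\]
basis elements, matching the dimension of the weakly symmetric special biserial algebra $B(Q, f, m_\bullet, c_\bullet)$ recalled in Section~\ref{roz2}.

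To see that this set spans $B$, I argue inductively on path length. For any arrow $\alpha$ starting at $i$, the length-two path $\alpha g(\alpha)$ is already an initial subpath of $B_\alpha$, while $\alpha f(\alpha)$ is either zero by relation~(i) (when $\alpha \notin \Omega(Q, f)$) or rewritten by relation~(ii) as $d_\alpha D_\alpha$, which is itself an initial subpath of $B_\alpha$. Iterating the same dichotomy at every subsequent vertex forces each nonzero path to travel along the cycle determined by $g$; relation~(iv) kills everything strictly longer than $B_\alpha$, and relation~(iii) identifies the two maximal cycles at each vertex. Hence the candidate set spans.

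The main obstacle is linear independence, which I would handle by passing to the arrow-filtered associated graded algebra $\mathrm{gr}\,B$. Since the correcting term $d_\alpha D_\alpha$ in relation~(ii) is a path of length at least $2$, each generator of $J(Q, f, m_\bullet, r_\bullet, c_\bullet, d_\bullet)$ has the same leading term as the corresponding defining relation of $B(Q, f, m_\bullet, c_\bullet)$. Therefore $\mathrm{gr}\,B$ is a quotient of $B(Q, f, m_\bullet, c_\bullet)$, which yields $\dim_{K} B \leq \sum_{\mathcal{O}} m_{\mathcal{O}} n_{\mathcal{O}}^{2}$, and combined with the spanning step this inequality becomes equality. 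The delicate point I expect to absorb most of the technical work is verifying that conditions~(i) and~(ii) on the admissible function $d_\bullet$ — in particular $d_\alpha d_\beta \neq 1$, the weakly symmetric counterpart of condition~(C2) of \cite{CBVF} — really do prevent hidden collapses among the candidate basis elements.

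Once the basis is in hand, the remaining assertions are immediate. At each vertex $i \in Q_0$, the indecomposable projective $e_i B$ has radical $\alpha B + \overline{\alpha} B$; each summand is uniserial, with composition series read off the cycle $B_\alpha$ (respectively $B_{\overline{\alpha}}$), and their intersection is the one-dimensional socle spanned by $B_\alpha$ and its scalar multiple $B_{\overline{\alpha}}$. This establishes the biserial property, and since $\soc(e_i B) \cong S_i \cong \tp(e_i B)$ for every $i \in Q_0$, the algebra $B$ is weakly symmetric.
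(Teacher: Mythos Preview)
Your overall strategy---exhibit, for each vertex $i$, the basis of $e_iB$ consisting of $e_i$, the proper initial subpaths of $B_\alpha$ and $B_{\overline\alpha}$, and the common socle element, and then read off biseriality, weak symmetry and the dimension formula---is exactly what the paper does. In fact the paper's argument is briefer than yours: it simply \emph{asserts} that this set is a basis of $e_iB$ and moves on, without separating spanning from independence.

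Where your outline breaks down is the linear-independence step. From ``the leading terms of the generators of $J$ coincide with the relations of $B(Q,f,m_\bullet,c_\bullet)$'' you correctly deduce that $\operatorname{in}(J)\supseteq J'$ and hence that $\operatorname{gr}B$ is a \emph{quotient} of $B(Q,f,m_\bullet,c_\bullet)$, giving $\dim_K B=\dim_K\operatorname{gr}B\le\sum_{\mathcal O}m_{\mathcal O}n_{\mathcal O}^2$. But that is the \emph{same} inequality your spanning argument already supplies; combining two upper bounds does not produce equality. What linear independence needs is the reverse inequality $\dim_K B\ge\sum_{\mathcal O}m_{\mathcal O}n_{\mathcal O}^2$, i.e.\ that $\operatorname{in}(J)$ is no \emph{larger} than the special-biserial ideal. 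That direction is not automatic from leading terms of generators: it is essentially a Diamond-Lemma/confluence check on the rewriting system, and it is exactly there---not in the upper bound---that the admissibility hypotheses on $d_\bullet$ (in particular $d_\alpha d_\beta\ne 1$) must be invoked to rule out unexpected collapses. A secondary point: $D_\alpha$ need not have length $\ge 2$ (take $r_\alpha=1$ and $f^2(\alpha)=g(\alpha)$, so $C_\alpha=\alpha$), and when $|D_\alpha|\le 2$ the relation $\alpha f(\alpha)-d_\alpha D_\alpha$ no longer has $\alpha f(\alpha)$ as its strict lowest-degree part, so even your leading-term comparison requires more care in those cases.
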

\begin{proof}
We may assume that $B$ is of dimensional at least $3$. It follows from the definition that $B$ is of the form $KQ_{B}/I_{B}$, where $Q_{B}$ is obtained from $Q$ by remaining all virtual loops, and $I_{B}=J\lo Q,f,m_{\bullet},r_{\bullet},c_{\bullet},d_{\bullet}\po\cap KQ_{B}$. Let $i$ be a vertex of $Q$ and $\alpha$, $\overline{\alpha}$ the two arrows in $Q$ starting at $i$. If $\alpha\in\Omega\lo Q,f\po$ (respectively, $\overline{\alpha}\in\Omega\lo Q,f\po$) then $\alpha f\lo\alpha\po=d_{\alpha}D_{\alpha}$ (respectively, $\overline{\alpha}f\lo\overline{\alpha}\po=d_{\overline{\alpha}}D_{\overline{\alpha}}$) belongs to the uniserial right module $\alpha B$ (respectively, $\overline{\alpha}B$). Then the indecomposable projective $B$-module $P_{i}=e_{i}B$ has a basis given by $e_{i}$ together with all initial proper subwords of $B_{\alpha}$ and $B_{\overline{\alpha}}$, $c_{\alpha}B_{\alpha}=c_{\overline{\alpha}}B_{\overline{\alpha}}$, and hence $\dim_{K}{P_{i}}=m_{\alpha}n_{\alpha}+m_{\overline{\alpha}}n_{\overline{\alpha}}$. We also note that $B_{\alpha}$ (respectively, $B_{\overline{\alpha}}$) generates the socle of $P_{i}$, which is a simple module isomorphic to $S_{i}=\tp{P_{i}}$. Hence $B$ is a weakly symmetric biserial algebra. Clearly, the union of the chosen bases of the indecomposable projective $B$-modules gives a basis of $B$, and we deduce that $\dim_{K}{B} = \sum_{\mathcal{O}\in\mathcal{O}\lo g\po}m_{\mathcal{O}} n_{\mathcal{O}}^{2}$.
\end{proof}
A generalized biserial quiver algebra $B\lo Q,f,m_{\bullet},r_{\bullet},c_{\bullet},d_{\bullet}\po$ with $d_{\bullet}$ a zero function is a special biserial algebra, not depending on the rank function $r_{\bullet}$, and is defined by $B\lo Q,f,m_{\bullet},c_{\bullet}\po$.
\begin{rmk}
Let $\lo Q,f\po$ be a biserial quiver. Following \cite{ES3}, a vertex $i\in Q_{0}$ is called a border vertex of $\lo Q,f\po$ if there is a loop $\alpha$ at $i$ with $f\lo\alpha\po=\alpha$. The set $\partial\lo Q,f\po$ of all border vertices of $\lo Q,f\po$ is called the border of $\lo Q,f\po$. Moreover, a loop $\alpha\in Q_{1}$ with $f\lo\alpha\po=\alpha$ is called a border loop of $\lo Q,f\po$. We note that every border loop $\alpha$ of $\lo Q,f\po$ is an admissible arrow of $\lo Q,f\po$, and hence the set $\Omega\lo Q,f\po$ contains the set $\partial\lo Q,f\po^{*}$ of all border loops of $\lo Q,f\po$. Assume that the border $\partial\lo Q,f\po$  is not empty. Then a function
\begin{displaymath}
b_{\bullet} : \partial\lo Q,f\po \longrightarrow K
\end{displaymath}
is called in \cite{ES3} a border function of $\lo Q,f\po$. To a border function $b_{\bullet}$ of $\lo Q,f\po$ we may associate an admissible function
\begin{displaymath}
b_{\bullet}^{*} : \Omega\lo Q,f\po \longrightarrow K
\end{displaymath}
such that $b_{\alpha}^{*}=b_{s\lo\alpha\po}$, for any loop $\alpha\in\partial\lo Q,f\po^{*}$, and $b_{\alpha}^{*}=0$ for all arrows $\alpha\in\Omega\lo Q,f\po\setminus\partial\lo Q,f\po^{*}$. Let $m_{\bullet}: \mathcal{O}\lo g\po \longrightarrow \mathbbm{N}^{*}$ be a weight function of $\lo Q,f\po$, and $r_{\bullet}^{*} : \Omega\lo Q,f\po \longrightarrow \mathbbm{N}^{*}$ the rank function with $r_{\alpha}^{*}=m_{\alpha}$ for any arrow $\alpha\in\Omega\lo Q,f\po$. Finally, let $c_{\bullet}: Q_{1}\longrightarrow K^{*}$ be the constant parameter function taking only the value $1$. Then the associated generalized biserial quiver algebra $B\lo Q,f,m_{\bullet},r_{\bullet}^{*},c_{\bullet},b_{\bullet}^{*}\po$ is the symmetric biserial algebra $B\lo Q,f,m_{\bullet},b_{\bullet}\po$ with border considered in \cite{ES3,ES4}.
\end{rmk}

We describe now local generalized biserial quiver algebras. We have two possibilities.
\begin{exa}\label{przy3_5}
Let $\lo Q,f\po$ be the biserial quiver with
\begin{center}
\begin{tikzpicture}[auto=right,>=\grot]
\node (0) at (0,0) {$1$};
\path (0) edge [->,in=50,out=310,loop,looseness=12] node {$\beta$} (0);
\path (0) edge [->,in=130,out=230,loop,looseness=12] node[left] {$\alpha$} (0);
\node[text width=0cm,text centered] (n1) at (-3,0) {$Q:$};
\end{tikzpicture}
\end{center}
and $f\lo\alpha\po=\alpha$, $f\lo\beta\po=\beta$. Then $g\lo\alpha\po=\beta$, $g\lo\beta\po=\alpha$, and we have only one $g$-orbit $\mathcal{O}\lo\alpha\po=\lo\alpha\ \beta\po=\mathcal{O}\lo\beta\po$. Hence a multiplicity function $m_{\bullet}:\mathcal{O}\lo g\po\longrightarrow\mathbbm{N}^{*}$ is given by a positive integer $m=m_{\alpha}=m_{\beta}$. Moreover, $f^{2}\lo\alpha\po\in\mathcal{O}\lo\alpha\po$, $f^{2}\lo\beta\po\in\mathcal{O}\lo\beta\po$, and hence $\Omega\lo Q,f\po=Q_{1}$. Let $c_{\bullet}:Q_{1}\longrightarrow K^{*}$ be a parameter function, $r_{\bullet} : \Omega\lo Q,f\po \longrightarrow \mathbbm{N}^{*}$ a rank function, and $d_{\bullet} : \Omega\lo Q,f\po \longrightarrow K$ an admissible function of $\lo Q,f,m_{\bullet}\po$. Then the associated generalized biserial quiver algebra $B=B\lo Q,f,m_{\bullet},r_{\bullet},c_{\bullet},d_{\bullet}\po$ is given by the quiver $Q$ and the relations
\begin{alignat*}{5}
\alpha^{2}&=d_{\alpha}\lo\alpha\beta\po^{r_{\alpha}},\qquad & \beta^{2}&=d_{\beta}\lo\beta\alpha\po^{r_{\beta}},\qquad & c_{\alpha}\lo\alpha\beta\po^{m}&=c_{\beta}\lo\beta\alpha\po^{m},\qquad & \lo\alpha\beta\po^{m}\alpha&=0,\qquad & \lo\beta\alpha\po^{m}\beta&=0.
\end{alignat*}
\end{exa}
\begin{exa}
Let $\lo Q,f\po$ be the biserial quiver with $Q$ as in Example \ref{przy3_5} and $f\lo\alpha\po=\beta$, $f\lo\beta\po=\alpha$. Then $g\lo\alpha\po=\alpha$, $g\lo\beta\po=\beta$, and we have two $g$-orbits $\mathcal{O}\lo\alpha\po=\lo\alpha\po$, $\mathcal{O}\lo\beta\po=\lo\beta\po$. Since $f^{2}\lo\alpha\po=\alpha\in\mathcal{O}\lo\alpha\po$ and $f^{2}\lo\beta\po=\beta\in\mathcal{O}\lo\beta\po$, we have $\Omega\lo Q,f\po=Q_{1}$. Further, a multiplicity function $m_{\bullet}:\mathcal{O}\lo g\po\longrightarrow\mathbbm{N}^{*}$ is given by two positive integers $m=m_{\alpha}$, and $n=m_{\beta}$. Let $c_{\bullet}:Q_{1}\longrightarrow K^{*}$ be a parameter function, $r_{\bullet} : \Omega\lo Q,f\po \longrightarrow \mathbbm{N}^{*}$ a rank function, and $d_{\bullet} : \Omega\lo Q,f\po \longrightarrow K$ an admissible function of $\lo Q,f,m_{\bullet}\po$. Then the associated generalized biserial quiver algebra $B=B\lo Q,f,m_{\bullet},r_{\bullet},c_{\bullet},d_{\bullet}\po$ is given by the quiver $Q$ and the relations
\begin{alignat*}{5}
\alpha\beta&=d_{\alpha}\alpha^{r_{\alpha}},\qquad & \beta\alpha&=d_{\beta}\beta^{r_{\beta}},\qquad & c_{\alpha}\alpha^{m}&=c_{\beta}\beta^{n},\qquad & \alpha^{m+1}&=0,\qquad & \beta^{n+1}&=0.
\end{alignat*}
We note that for $m=1$, B is isomorphic to the truncated polynomial algebra $K\lkw X\pkw/\lo X^{n+1}\po$.
\end{exa}

\section{Proof of Theorem \ref{tw_1_1}}\label{roz4}

Let $Q=\lo Q_{0},Q_{1},s,t\po$ be a quiver. Following \cite{CBVF}, a \emph{bisection} of $Q$ is a pair $\lo \sigma,\tau\po$ of functions $\sigma, \tau : Q_{1}\longrightarrow \lk\pm 1\pk$ such that if $\alpha$ and $\beta$ are distinct arrows with $s\lo\alpha\po=s\lo\beta\po$ (respectively $t\lo\alpha\po=t\lo\beta\po$), then $\sigma\lo\alpha\po\neq\sigma\lo\beta\po$ (respectively $\tau\lo\alpha\po\neq\tau\lo\beta\po$). We note that a quiver $Q$ has a bisection if and only if the number of arrows in $Q$ with a prescribed source or sink is at most two. A \emph{bisected quiver} is a triple $\lo Q,\sigma,\tau\po$ consisting of a quiver $Q$ and its bisection $\lo\sigma,\tau\po$. Let $\lo Q,\sigma,\tau\po$ be a bisected quiver. A path $\alpha_{1}\alpha_{2}\ldots\alpha_{r}$ of $Q$ is said to be a $\lo \sigma,\tau\po$-{\it good path} (shortly, good path), if $\tau\lo\alpha_{i-1}\po=\sigma\lo\alpha_{i}\po$ for all $i\in\lk 2,3,\dots,r\pk$, otherwise it is $\lo \sigma,\tau\po$-{\it bad path} (shortly, bad path). If $i$ is a vertex of $Q$, then $e_{i}$ is a $\lo\sigma,\tau\po$-good path of length $0$. We denote by $\Lambda\lo Q,\sigma,\tau\po$ the set of all bad paths of length $2$ of a bisected quiver $\lo Q,\sigma,\tau\po$. If $\alpha$ is an arrow of $Q$ such that $|s^{-1}\lo t\lo\alpha\po\po|=2$ (that is, the vertex $t\lo\alpha\po$ is a source of two distinct arrows) then there is a unique arrow $\gamma$ such that $\alpha\gamma\in\Lambda\lo Q,\sigma,\tau\po$.
\begin{proof}[Proof of the Theorem \ref{tw_1_1}]
The implication (ii) $\Longrightarrow$ (i) follows from Proposition \ref{stw_3_3}. We prove (i) $\Longrightarrow$ (ii). Assume $A$ is a weakly symmetric biserial algebra. It follows from \cite[Corrolary 3]{CBVF} that there are a bisected quiver $\lo Q_{A},\sigma,\tau\po$ and a function $h_{\bullet}:\Lambda\lo Q_{A},\sigma,\tau\po\longrightarrow KQ_{A}$ satisfying the following conditions:
\begin{enumerate}[(C1)]
\item \label{biserial_pierwsza} $h_{\alpha\gamma}$ is either zero, or of the form $d_{\alpha\gamma}^{*}\delta_{1}\delta_{2}\dots\delta_{r}$ with $d_{\alpha\gamma}^{*}\in K^{*}$, $r\geqslant 1$ and $\alpha\delta_{1}\delta_{2}\dots\delta_{r}$ a good path with $t\lo\delta_{r}\po=t\lo\gamma\po$ and $\delta_{r}\neq\gamma$;
\item if $h_{\alpha\gamma}=d_{\alpha\gamma}^{*}\delta$ and $h_{\beta\delta}=d_{\beta\delta}^{*}\gamma$ with $d_{\alpha\gamma}^{*},d_{\beta\delta}^{*}\in K^{*}$, then $d_{\alpha\gamma}^{*}d_{\beta\delta}^{*}\neq 1$,
\end{enumerate}
and an admissible ideal $I$ in $KQ_{A}$ containing all elements $\alpha\gamma-\alpha h_{\alpha\gamma}$ for all $\alpha\gamma\in\Lambda\lo Q_{A},\sigma,\tau\po$, such that the algebra $A$ is isomorphic to $KQ_{A}/I$. Without lost of generality we may assume that $A= KQ_{A}/I$. We will define now a biserial quiver $Q$. Since $A$ is biserial, for each vertex $i\in Q_{A}$, we have $|s^{-1}\lo i\po| \leqslant 2$ and $|t^{-1}\lo i\po| \leqslant 2$. We claim that, for each vertex $i \in Q_{0}$, the equality $|s^{-1}\lo i\po|= |t^{-1}\lo i\po|$ holds. Indeed, let $|s^{-1}(i)| = 1$ for a vertex $i\in Q_{0}$. Then, by the biserial relations, the projective module $P_{i}=e_{i}A$ is uniserial and isomorphic to the injective hull of the simple module $S_{i}=\tp\lo P_{i}\po$, because $A$ is weakly symmetric. Hence $|t^{-1}\lo i\po|=1$. If $|t^{-1}\lo i\po|=1$ then applying similar arguments to the uniserial left projective module $Ae_{i}$, we conclude that $|s^{-1}\lo i\po| = 1$. We note that the opposite algebra $A^{op}$ of $A$ is also weakly symmetric and special biserial.

Let $\Delta$ be the set of all vertices $i\in\lo Q_{A}\po_{0}$ with $|s^{-1}\lo i\po|= 1 \}$ (equivalently, $|t^{-1}\lo i\po|= 1 \}$). We define $Q=\lo Q_{0},Q_{1},s,t\po$ with $Q_{0} = \lo Q_{A}\po_{0}$, $Q_{1}=\lo Q_{A}\po_{1}\bigcup\lk\eta_{i}: i\in\Delta\pk$, and $s\lo\eta_{i}\po=i=t\lo\eta_{i}\po$ for any $i\in\Delta$. Observe that $Q$ is a $2$-regular quiver. We define a permutation $f$ of $Q_{1}$. For each $i\in\Delta$, there are unique arrows $\alpha_{i}$ and $\beta_{i}$ in $\lo Q_{A}\po_{1}$ such that $s\lo\alpha_{i}\po=i=t\lo\beta_{i}\po$, and we set $f\lo\eta_{i}\po=\alpha_{i}$ and $f\lo\beta_{i}\po=\eta_{i}$. If $\alpha$ is an arrow of $Q_{A}$ with $t\lo\alpha\po\notin\Delta$, then we define $f\lo\alpha\po$ as the unique arrow in $\lo Q_{A}\po_{1}$ with $\alpha f\lo\alpha\po\in\Lambda\lo Q,\sigma,\tau\po$. Then $\lo Q, f\po$ is a biserial quiver.

We define now a weight function $m_{\bullet} : \mathcal{O}\lo g\po\longrightarrow\mathbbm{N}^{*}$, where $g=\overline{f}$. For each $i\in\Delta$, we have $g\lo\eta_{i}\po=\eta_{i}$, and we set $m_{\mathcal{O}\lo\eta_{i}\po}=1$. Let $\alpha$ be an arrow of $Q_{A}$, starting at vertex $j$, and let $n_{\alpha}=|\mathcal{O}\lo\alpha\po|$. Since $A$ is weakly symmetric biserial, there exists $m_{\alpha}\in\mathbbm{N}^{*}$ such  that
\begin{displaymath}
B_{\alpha}=\lo\alpha g\lo\alpha\po\dots g^{n_{\alpha}-1}\lo\alpha\po\po^{m_{\alpha}}
\end{displaymath}
is a maximal path in $Q_{A}$ which does not belong to $I$ and spans the socle $e_{j}A$. We claim that $m_{\alpha}=m_{g\lo\alpha\po}$. Indeed, let $k=t\lo\alpha\po$. Then $\lo g\lo\alpha\po \dots g^{n_{\alpha}-1}\lo\alpha\po\alpha\po^{m_{\alpha}-1} g\lo\alpha\po \dots g^{n_{\alpha}-1}\lo\alpha\po$ is a subpath of $B_{\alpha}$, and hence is not in $I$. We know also
\begin{displaymath}
B_{g\lo\alpha\po}=\lo g\lo\alpha\po \dots g^{n_{\alpha}-1}\lo\alpha\po\alpha\po^{m_{g\lo\alpha\po}}
\end{displaymath}
is a maximal path in $Q_{A}$ which does not belong to $I$ and spans the socle $e_{k}A$. Then
\begin{displaymath}
\lo g\lo\alpha\po \dots g^{n_{\alpha}-1}\lo\alpha\po\alpha\po^{m_{\alpha}}=\lkw\lo g\lo\alpha\po \dots g^{n_{\alpha}-1}\lo\alpha\po\alpha\po^{m_{\alpha-}-1}g\lo\alpha\po\dots g^{n_{\alpha}-1}\lo\alpha\po\pkw\alpha
\end{displaymath}
does not belong to $I$, and hence $m_{\alpha}\leqslant m_{g\lo\alpha\po}$. Now, repeating the above arguments, we obtain the inequalities $m_{\alpha}\leqslant m_{g\lo\alpha\po}\leqslant\dots\leqslant m_{g^{n_{\alpha}-1}\lo\alpha\po}\leqslant m_{\alpha}$, and consequently $m_{\bullet}$ is constant on the $g$-orbit $\mathcal{O}\lo\alpha\po$ of $\alpha$. Therefore, we may define a parameter function $m_{\bullet} : \mathcal{O}\lo g\po\longrightarrow\mathbbm{N}^{*}$ such that $m_{\mathcal{O}\lo\alpha\po}=m_{\alpha}$ for any arrow $\alpha\in \lo Q_{A}\po_{1}$ and $m_{\mathcal{O}\lo\eta_{i}\po}=1$ for any $i\in\Delta$.

We define now a parameter function $c_{\bullet} : Q_{1}\longrightarrow K^{*}$. Let $i\in\Delta$ and $\alpha_{i}$ be the unique arrow in $\lo Q_{A}\po_{1}$ starting at $i$. Then we set $c_{\eta_{i}}=1=c_{\alpha_{i}}$. Assume now that $j$ is a vertex in $\lo Q_{A}\po_{0}\setminus\Delta$, and $\alpha$, $\overline{\alpha}$ the two arrows starting at $j$. Then each of the paths $B_{\alpha}$ and $B_{\overline{\alpha}}$ spans the socle of $e_{j}A$, and hence there are $c_{\alpha}$ and $c_{\overline{\alpha}}$ in $K^{*}$ such that $c_{\alpha}B_{\alpha}=c_{\overline{\alpha}}B_{\overline{\alpha}}$.

We define now a rank function $r_{\bullet} : \Omega\lo Q,f\po\longrightarrow\mathbbm{N}^{*}$ and an admissible function $d_{\bullet} : \Omega\lo Q,f\po\longrightarrow K$. For each $i\in\Delta$, $\eta_{i}\in\Omega\lo Q,f\po$ and there is the unique $\beta_{i}$ in $\lo Q_{A}\po$ such that $t\lo\beta_{i}\po=i$, and we set $d_{\beta_{i}}=0$. Let $\alpha\in\Omega\lo Q,f\po$ be such that $t\lo\alpha\po\notin\Delta$. From definition of the permutation $f$, we have $\alpha f\lo\alpha\po\in\Lambda\lo Q,\sigma,\tau\po$. If $h_{\alpha f\lo\alpha\po}=0$ then we set $r_{\alpha}=1$ and $d_{\alpha}=0$. Assume $h_{\alpha f\lo\alpha\po}\neq 0$. Then there are $d_{\alpha f\lo\alpha\po}^{*}\in K^{*}$ and arrows $\delta_{1}$, $\delta_{2}$, $\dots$, $\delta_{r}$ of $Q_{A}$, with $r\geqslant 1$ and $\alpha\delta_{1}\delta_{2}\dots\delta_{r}$ a good path with $t\lo\delta_{r}\po=t\lo f\lo\alpha\po\po$ and $\delta_{r}\neq f\lo\alpha\po$, such that $h_{\alpha f\lo\alpha\po}=d_{\alpha f\lo\alpha\po}^{*}\delta_{1}\delta_{2}\dots\delta_{r}$. We set $d_{\alpha}=d_{\alpha f\lo\alpha\po}^{*}$. From definition of the permutation $g$, we have $\delta_{1}$, $\delta_{2}$, $\dots$, $\delta_{r}\in\mathcal{O}\lo\alpha\po$ and hence $\alpha\delta_{1}\delta_{2}\dots\delta_{r}$ is a subpath of $B_{\alpha}$. Moreover, we have the equality $g^{-1}\lo f^{2}\lo\alpha\po\po=\delta_
{r}$. Then there is $t_{\alpha}\in\mathbbm{N}$ such that $\alpha\delta_{1}\delta_{2}\dots\delta_{r}=\lo\alpha g\lo\alpha\po g^{2}\lo\alpha\po\dots g^{n_{\alpha}-1}\lo\alpha\po\po^{t_{\alpha}}\alpha g\lo\alpha\po\dots g^{-1}\lo f^{2}\lo\alpha\po\po$. We set $r_{\alpha}=t_{\alpha}+1$. With these biserial quiver $\lo Q,f\po$, weight function $m_{\bullet}$, parameter function $c_{\bullet}$, rank function $r_{\bullet}$ and admissible function $d_{\bullet}$ there is a canonical isomorphism of $K$-algebras $A=KQ_{A}/I\longrightarrow B\lo Q,f,m_{\bullet},r_{\bullet},c_{\bullet},d_{\bullet}\po$.
\end{proof}
\begin{exa}\label{przy4_1}
Let $\lo Q,f\po$ be the biserial quiver
\begin{center}
\begin{tikzpicture}[auto=left,>=\grot]
\node (drugi) at (1,1) {$2$};
\node (pierwszy) at (-1,1){$1$};
\node (trzeci) at (0,-0.74) {$3$};
\path (pierwszy) edge [->,bend right=-50] node {$\alpha$} (drugi);
\path (drugi) edge [->,bend right=-50] node {$\beta$} (trzeci);
\path (trzeci) edge [->,bend right=-50] node {$\gamma$} (pierwszy);
\path (pierwszy) edge [->,in=125,out=215,loop,looseness=10] node {$\xi$} (pierwszy);
\path (drugi) edge [->,in=325,out=55,loop,looseness=10] node {$\eta$} (drugi);
\path (trzeci) edge [->,in=225,out=315,loop,looseness=10] node {$\mu$} (trzeci);
\end{tikzpicture}
\end{center}
with $f$-orbits $\lo\alpha\ \beta\ \gamma\po$, $\lo\xi\po$, $\lo\eta\po$, $\lo\mu\po$. Then $\mathcal{O}\lo g\po$ consists of one $g$-orbit $\mathcal{O}\lo\alpha\po=\lo\alpha\ \eta\ \beta\ \mu\ \gamma\ \xi\po$. Hence we have $\Omega\lo Q,f\po=Q_{1}$. Let $m_{\bullet}:\mathcal{O}\lo g\po\longrightarrow\mathbbm{N}^{*}$ be the weight function with $m_{\mathcal{O}\lo\alpha\po}=2$ and $r_{\bullet}: Q_{1}\longrightarrow\mathbbm{N}^{*}$ the rank function such that $r_{\xi}=r_{\alpha}=r_{\gamma}=1$ and $r_{\eta}=r_{\beta}=r_{\mu}=2$. We take the parameter function $c_{\bullet}: Q_{1}\longrightarrow K^{*}$ with $c_{\xi}=c_{\eta}=c_{\mu}=c_{\gamma}=1$, $c_{\alpha}=c_{\beta}=-1$. Moreover, let $d_{\bullet}: Q_{1}\longrightarrow K$ be the admissible function such that $d_{\xi}=d_{\beta}=1$, $d_{\eta}=d_{\mu}=0$, $d_{\alpha}=d_{\gamma}=-1$. Then the generalized biserial quiver algebra $B=B\lo Q,f,m_{\bullet},r_{\bullet},c_{\bullet},d_{\bullet}\po$ is given by the quiver $Q$ and the relations
\begin{alignat*}{5}
\alpha\beta&=-\alpha\eta\beta\mu, & \xi^{2}&=\xi\alpha\eta\beta\mu\gamma,\qquad & \lo\xi\alpha\eta\beta\mu\gamma\po^{2}&=-\lo\alpha\eta\beta\mu\gamma\xi\po^{2},\qquad & \lo\alpha\eta\beta\mu\gamma\xi\po^{2}\alpha &=0,\qquad & \lo\xi\alpha\eta\beta\mu\gamma\po^{2}\xi &=0, \\
\beta\gamma&=\beta\mu\gamma\xi\alpha\eta\beta\mu\gamma\xi,\qquad & \eta^{2}&=0, & \lo\eta\beta\mu\gamma\xi\alpha\po^{2}&=-\lo\beta\mu\gamma\xi\alpha\eta\po^{2}, & \lo\beta\mu\gamma\xi\alpha\eta\po^{2}\beta &=0, & \lo\eta\beta\mu\gamma\xi\alpha\po^{2}\eta&=0, \\ 
\gamma\alpha&=-\gamma\xi\alpha\eta, & \mu^{2}&=0, & \lo\mu\gamma\xi\alpha\eta\beta\po^{2}&=\lo\gamma\xi\alpha\eta\beta\mu\po^{2}, & \lo\gamma\xi\alpha\eta\beta\mu\po^{2}\gamma &=0, & \lo\mu\gamma\xi\alpha\eta\beta\po^{2}\mu&=0.
\end{alignat*}
It follows from Proposition \ref{stw_3_3} that $\dim_{K}B=2\cdot 6^{2}=72$. Further, if $K$ is of characteristic different from $2$, then $B$ is weakly symmetric but not symmetric, because we have the equality $\lo\xi\alpha\eta\beta\mu\gamma\po^{2}=-\lo\alpha\eta\beta\mu\gamma\xi\po^{2}$ of socle element of $B$.
\end{exa}
\begin{exa}
Let $\lo Q,f\po$ be the biserial quiver with $Q$ as in Example \ref{przy4_1} and $f$-orbits $\lo\alpha\ \beta\ \mu\ \gamma\ \xi\po$, $\lo\eta\po$. Then $\mathcal{O}\lo g\po$ consists of $g$-orbits $\mathcal{O}\lo\alpha\po=\lo\alpha\ \eta\ \beta\ \gamma\po$, $\mathcal{O}\lo\mu\po=\lo\mu\po$ and $\mathcal{O}\lo\xi\po=\lo\xi\po$. Hence we have $\Omega\lo Q,f\po=\lk\beta,\gamma,\eta\pk$. Let $m_{\bullet}:\mathcal{O}\lo g\po\longrightarrow\mathbbm{N}^{*}$ be the weight function with $m_{\mathcal{O}\lo\alpha\po}=3$, $m_{\mathcal{O}\lo\mu\po}=2$, $m_{\mathcal{O}\lo\xi\po}=1$ and $r_{\bullet}: \lk\beta,\gamma,\eta\pk\longrightarrow\mathbbm{N}^{*}$ the rank function such that $r_{\gamma}=1$, $r_{\beta}=2$ and $r_{\eta}=3$. We take the parameter function $c_{\bullet}: Q_{1}\longrightarrow K^{*}$ with $c_{\xi}=c_{\eta}=c_{\mu}=c_{\gamma}=1$, $c_{\alpha}=c_{\beta}=-1$. Moreover, let $d_{\bullet}: \lk\beta,\gamma,\eta\pk\longrightarrow K$ be the admissible function such that $d_{\beta}=1$, $d_{\eta}=7$, $d_{\gamma}=0$ (since $m_{\xi}n_{\xi}=1$ and $f^{-1}\lo\xi\po=\gamma$). Then the generalized biserial quiver algebra $B=B\lo Q,f,m_{\bullet},r_{\bullet},c_{\bullet},d_{\bullet}\po$ is given by the quiver $Q$ and the relations
\begin{alignat*}{5}
\alpha\beta&=0,\qquad & \beta\mu &=\beta\gamma\alpha\eta\beta, & \xi &=-\lo\alpha\eta\beta\gamma\po^{3}, & \lo\alpha\eta\beta\gamma\po^{3}\alpha &=0,\qquad & \xi^{2} &=0, \\
\mu\gamma&=0, & \eta^{2} &=7\lo\eta\beta\gamma\alpha\po^{2}\beta\gamma\alpha,\qquad & \lo\eta\beta\gamma\alpha\po^{3}&=-\lo\beta\gamma\alpha\eta\po^{3},\qquad & \lo\beta\gamma\alpha\eta\po^{3}\beta &=0, & \lo\eta\beta\gamma\alpha\po^{3}\eta&=0, \\
\xi\alpha&=0, & \gamma\xi&=0, & \mu^{2}&=\lo\gamma\alpha\eta\beta\po^{3}, & \lo\gamma\alpha\eta\beta\po^{3}\gamma &=0, & \mu^{3}&=0.
\end{alignat*}
It follows from Proposition \ref{stw_3_3} that $\dim_{K}B=3\cdot 4^{2}+2\cdot 1+1\cdot 1=51$.
\end{exa}

\section*{Acknowledgements}
The research was supported by the research grant DEC-2011/02/A/ST1/00216 of the National Science Centre Poland.

\bibliographystyle{elsarticle-num}

\end{document}